\def\qed{\nopagebreak\hfill{\rule{4pt}{7pt}}}
\newtheorem{theo}{Theorem}[section]
\newtheorem{lemm}[theo]{Lemma}
\newtheorem{prop}[theo]{Proposition}
\newtheorem{coro}[theo]{Corollary}
\newtheorem{conj}[theo]{Conjecture}
\theoremstyle{remark}
\newtheorem{remark}[theo]{Remark}
\newdimen\Squaresize \Squaresize=11pt
\newdimen\Thickness \Thickness=0.7pt
\def\Square#1{\hbox{\vrule width \Thickness
   \vbox to \Squaresize{\hrule height \Thickness\vss
    \hbox to \Squaresize{\hss#1\hss}
   \vss\hrule height\Thickness}
\unskip\vrule width \Thickness} \kern-\Thickness}
\def\Vsquare#1{\vbox{\Square{$#1$}}\kern-\Thickness}
\def\moins{\raise 1pt\hbox{{$\scriptstyle -$}}}
\begin{document}

\begin{center}
{\large \bf On Ratio Monotonicity of a New Kind of Numbers Conjectured by Z.-W. Sun}
\end{center}

\begin{center}
Brian Y. Sun\\[8pt]
Center for Combinatorics, LPMC-TJKLC\\
Nankai University, Tianjin 300071, P. R. China\\[6pt]

%College of Mathematics and System Science, \\
%Xinjiang University\\
%Urumiqi 830046, P. R. China\\[6pt]
Email: {\tt brian@mail.nankai.edu.cn}\\

\end{center}

\vspace{0.3cm} \noindent{\bf Abstract.} Recently, Z. W. Sun put forward a series of conjectures on monotonicity of combinatorial sequences in the form of $\{z_n/z_{n-1}\}_{n=N}^\infty$ and $\{\sqrt[n+1]{z_{n+1}}/\sqrt[n]{z_n}\}_{n=N}^\infty$ for some positive integer $N$, where $\{z_n\}_{n=0}^\infty$ is a sequence of positive integers. Luca and St\u{a}nic\u{a}, Hou et al., Chen et al., Sun and Yang proved some of them. In this paper, we give an affirmative answer to monotonicity of another new kind of number conjectured by Z. W. Sun via interlacing method for log-convexity and log-concavity of a sequence, and we also use the criterion for log-concavity of a sequence in the form of $\{\sqrt[n]{z_n}\}_{n=1}^\infty$ due to Xia.

\noindent {\bf Keywords:} Log-concavity, log-convexity, ratio monotonicity.

\noindent {\bf AMS Classification:} 05A20; 05A10; 11B65; 11B37

\section{Introduction}
 Let $\{z_n\}_{n\geq 0}$ be a number-theoretic or combinatorial sequence of positive numbers. A sequence $\{z_n\}_{n\geq 0}$ of positive numbers is called \emph{(strictly) ratio monotonic} if the sequence $\{z_{n}/z_{n-1}\}_{n\geq 1}$ of its consecutive quotients  is (strictly) monotonic. The concept of ratio monotonicity is closely related log-convexity and log-concavity.
 A sequence $\{z_n\}_{n=0}^\infty$ is called \emph{log-convex(resp. log-concave)} if for all $n\geq 1$
\begin{equation}
\label{Ineq:log-cv and log-cc}
z_{n-1}z_{n+1}\geq z_n^2~~~(\text{resp.}z_{n-1}z_{n+1}\leq z_n^2).
\end{equation}
Correspondingly, if the inequality in \eqref{Ineq:log-cv and log-cc} is strict, we call the sequence $\{z_n\}_{n=0}^\infty$ is \emph{strictly log-convex(resp. log-concave)}.

Clearly, a sequence $\{z_n\}_{n=0}^\infty$ is (strictly) log-convex(resp. log-concave) if and only if the sequence $\{z_{n+1}/z_{n}\}_{n\geq 0}$ is (strictly) increasing(resp. decreasing). So, to study the ratio monotonicity is equivalent to study the log-convexity and log-concavity, see \cite{wz,zhu}. Up to now, the log-convex and log-concave sequences have been extensively investigated as they are often arise in combinatorics, algebra, geometry, analysis, probability and statistics, the reader can refer to \cite{brenti,wy,stanley} for details.

   Recently, Z. W. Sun \cite{sun1,sun2} posed  a series of conjectures about monotonicity of sequences of the following forms $\{z_{n+1}/z_n\}_{n\geq 0}^\infty$, $\{\sqrt[n]{z_n}\}_{n\geq 1}$ and $\{\sqrt[n+1]{z_{n+1}}/\sqrt[n]{z_{n}}\}_{n\geq 1}$.
It is worthy to mention that many scholars have made valuable progress on this subject, such as  Chen et al. \cite{cgw}, Hou et al. \cite{hsw}, Luca and St\u{a}nic\u{a} \cite{LS}, Wang an Zhu \cite{wz}, Sun and Yang \cite{sy} and Zhao \cite{zhao}, etc.

The main object of this paper is to prove a conjecture due to Z. W. Sun \cite{sun1} on ratio monotonicity of a sequence $\{R_n\}_{n=0}^\infty$.
This new kind of number is also introduced by him in \cite{sun1}.
It is defined in the following way:
\begin{equation}\label{Seq:R}
R_n=\sum_{k=0}^n\binom{n}{k}
\binom{n+k}{k}\frac{1}{2k-1},\,\,n=0,1,2,\ldots.
\end{equation}
Moreover, Z. W. Sun also defined the sequence $\{S_n\}_{n=0}^\infty$ in \cite{sun1}, where
\begin{equation}\label{Seq-S}
S_n=\sum_{k=0}^n\binom{n}{k}^2
\binom{2k}{k}(2k+1),\,\,n=0,1,2,\ldots.
\end{equation}

For the sequence $\{S_n\}_{n=0}^\infty$, Z. W. Sun conjectured that the sequence $\{S_n\}_{n=0}^\infty$  is strictly ratio increasing to the limit $9$ and the sequence $\{\sqrt[n]{S_n}\}_{n=1}^\infty$ is strictly ratio decreasing to the limit $1$, which are confirmed by Sun and Yang \cite{sy}.

\begin{theo}\emph{(\cite[Theorem 1.2 ]{sy})}\label{Thm:thm-S}
 The sequence $\{S_{n+1}/S_n\}_{n\geq 3}$ is strictly increasing to the limit $9$, and the sequence $\{\sqrt[n+1]{S_{n+1}}/\sqrt[n]{S_n}\}_{n\geq 1}$ is strictly decreasing to the limit $1$.
\end{theo}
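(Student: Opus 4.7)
My plan is to first locate a low-order linear recurrence with polynomial coefficients for $\{S_n\}$ by applying creative telescoping (Zeilberger's algorithm) to the summand $\binom{n}{k}^2\binom{2k}{k}(2k+1)$. The expected output is a recurrence of the form
\begin{equation*}
p_2(n)\,S_{n+2} + p_1(n)\,S_{n+1} + p_0(n)\,S_n = 0
\end{equation*}
with explicit polynomials $p_0, p_1, p_2 \in \mathbb{Z}[n]$. This recurrence is the single computational input from which both halves of the theorem will follow. Dividing through by $S_{n+1}$ and writing $u_n = S_{n+1}/S_n$, I would then extract the limit of $u_n$ from the leading asymptotics: as $n \to \infty$, the degrees of $p_0, p_1, p_2$ force $u_\infty$ to satisfy a characteristic equation whose relevant root is $9$, consistent with the numerical data $S_0, S_1, S_2, S_3 = 1, 7, 55, 465$ (ratios $7, 7.86, 8.45,\ldots$).

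For the first assertion (strict ratio increase to $9$), I would use the interlacing method advertised in the abstract. The goal is to produce two rational functions $L(n)$ and $U(n)$ in $n$, each of the form $9 - c/n + O(1/n^2)$, with $L(n) < u_n < U(n)$, such that $U(n) < L(n+1)$ for all $n \ge 3$. This interlacing is a cleaner target than log-convexity itself because it reduces to two one-sided bounds, each of which can be verified by induction using the recurrence. Specifically, if I substitute $u_n = S_{n+1}/S_n$ into the recurrence, I obtain
\begin{equation*}
u_{n+1} = -\frac{p_1(n)}{p_2(n)} - \frac{p_0(n)}{p_2(n)\,u_n},
\end{equation*}
so an upper bound $u_n < U(n)$ (together with positivity of coefficients in the right range) propagates to an upper bound for $u_{n+1}$, and similarly for lower bounds. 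The interlacing $U(n) < L(n+1)$ immediately gives $u_n < u_{n+1}$.

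For the second assertion, I would invoke Xia's criterion for log-concavity of $\{\sqrt[n]{z_n}\}_{n \ge 1}$. This criterion typically reduces the problem to verifying: (i) the ratio $u_n = S_{n+1}/S_n$ has a known limit (here $9$), (ii) $u_n$ grows at a controlled sublinear rate, and (iii) an explicit inequality relating $u_n$ to $n$ and to the limit holds for $n$ large enough. The bounds $L(n), U(n)$ constructed for part one supply exactly the quantitative control needed; the remaining small $n$ are handled by direct numerical check. Once log-concavity of $\{\sqrt[n]{S_n}\}$ is in hand, the ratio $\sqrt[n+1]{S_{n+1}}/\sqrt[n]{S_n}$ is decreasing, and its limit is $1$ because $\sqrt[n]{S_n}\to 9$ and $\sqrt[n+1]{S_{n+1}}/\sqrt[n]{S_n}=\sqrt[n+1]{S_{n+1}}/9 \cdot 9/\sqrt[n]{S_n} \to 1$.

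The main obstacle I anticipate is the engineering step of choosing $L(n)$ and $U(n)$ sharply enough to guarantee both the interlacing $U(n) < L(n+1)$ and the Xia-type inequality simultaneously, while remaining simple enough that the induction step reduces to a polynomial inequality one can dispatch by, e.g., writing the difference as a sum of manifestly positive monomials in $n$. A first-order correction of the form $9 - c/n$ will almost certainly fail the interlacing, so one likely needs $L(n)$ and $U(n)$ of the form $9 - a/n - b/n^2$ with carefully tuned constants; pinning these down and then certifying the resulting polynomial inequalities for all $n \ge 3$ (with the low-$n$ cases verified by hand) is where the real work lies.
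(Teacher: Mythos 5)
First, note that this paper does not prove Theorem \ref{Thm:thm-S} at all: it is imported verbatim from Sun and Yang \cite{sy}, who (as the introduction explains) obtained a genuine three-term recurrence for $S_n$ by invoking a divisibility result of Guo and Liu \cite{GL} and then applied a new log-convexity criterion together with the ratio log-concavity criterion of Chen, Guo and Wang \cite{cgw}. Your route --- interlacing bounds on $u_n=S_{n+1}/S_n$ plus Xia's criterion \cite{X} --- is therefore genuinely different from the cited proof; it is in fact exactly the strategy this paper deploys for the companion sequence $R_n$, and the Remark after Theorem \ref{Thm:kaifang-lcc} explicitly observes that this alternative method would also handle $\{\sqrt[n]{S_n}\}$.

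There is, however, one concrete step in your plan that fails as written: the ``single computational input'' you rely on, a second-order recurrence $p_2(n)S_{n+2}+p_1(n)S_{n+1}+p_0(n)S_n=0$ produced by Zeilberger's algorithm, does not exist by that route. The paper states explicitly that creative telescoping only yields the four-term recurrence \eqref{Rec:Seq-S}, so your update formula $u_{n+1}=-p_1(n)/p_2(n)-p_0(n)/(p_2(n)u_n)$ has no starting point. This is repairable in two ways: either import the Guo--Liu three-term recurrence as Sun and Yang do, or run the interlacing induction directly on the four-term recurrence, expressing $u_{n+2}$ in terms of $u_{n+1}$ and $u_n$ and carrying a two-step inductive hypothesis $b_n<u_n<b_{n+1}$ --- precisely the shape of the argument in Lemma \ref{Lem:bounds-R}. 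Your anticipated limit $9$ survives either repair (the characteristic polynomial of \eqref{Rec:Seq-S} is $(x-9)(x-1)^2$), and the rest of your outline (interlacing gives monotonicity of $u_n$; the ratio limit gives $\sqrt[n]{S_n}\to 9$ and hence $\sqrt[n+1]{S_{n+1}}/\sqrt[n]{S_n}\to 1$; Xia's criterion plus a finite check gives the decrease from $n\ge 1$) is sound, modulo the usual labor of tuning the bounding functions.
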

Additionally, Z. W. Sun put forward a similar conjecture on the sequence $\{R_n\}_{n=0}^\infty.$

\begin{conj}\label{Conj: conjecture_R} The sequence $\{R_{n+1}/R_n\}_{n\geq 3}$ is strictly increasing to the limit $3+2\sqrt{2}$, and the sequence $\{\sqrt[n+1]{R_{n+1}}/\sqrt[n]{R_n}\}_{n\geq 5}$ is strictly decreasing to the limit $1$.
\end{conj}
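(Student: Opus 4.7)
The plan is to reduce both statements to properties of the ratio sequence $r_n := R_{n+1}/R_n$, using a three-term $P$-recurrence for $\{R_n\}$ as the common engine. First I would apply Zeilberger's algorithm to the hypergeometric summand in \eqref{Seq:R} to produce an explicit relation
\[
p(n)\,R_{n+1} \;=\; q(n)\,R_n + s(n)\,R_{n-1}, \qquad n\geq 1,
\]
with polynomial coefficients. Dividing by $R_n$ converts this into a nonlinear recursion $r_n = q(n)/p(n) + s(n)/(p(n)\,r_{n-1})$, so any limit $\ell$ of $r_n$ must satisfy the quadratic obtained by passing to leading coefficients. Since one expects $q(n)/p(n)\to 6$ and $s(n)/p(n)\to -1$, the limit equation is $\ell^2-6\ell+1=0$, whose root greater than one is $\ell = 3+2\sqrt{2}$, matching the conjectured value.

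For the first half of Conjecture~\ref{Conj: conjecture_R}, I would use the interlacing method already used for similar sums (and in particular in the proof of Theorem~\ref{Thm:thm-S}): construct an auxiliary rational function $g(n)$, increasing in $n$ and bounded above by $3+2\sqrt{2}$, such that the implication $r_{n-1}\in(g(n-1),g(n))\Rightarrow r_n\in(g(n),g(n+1))$ holds under the nonlinear recursion. A natural candidate is the larger root of the ``frozen'' quadratic obtained by setting $r_n=r_{n-1}$ in the recursion, corrected by a term of order $1/n$. Once such an interlacing is established, the chain $r_{n-1}<g(n)<r_n$ gives strict increase of $r_n$ for all $n$ large enough by induction, and the base indices $n\geq 3$ are checked by direct numerical evaluation.

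For the second half, I would invoke the criterion of Xia mentioned in the abstract, which translates log-concavity of $\{\sqrt[n]{R_n}\}_{n\geq 5}$ into an explicit inequality among $R_{n-1},R_n,R_{n+1}$ weighted by powers of $n$. Substituting the sharp two-sided bounds on $r_n$ furnished by the interlacing step reduces this to a polynomial (or rational) inequality in $n$, which can be proved in closed form for all $n\geq N$ with $N$ small, with the remaining cases down to $n=5$ verified by direct computation. The companion limit statement is immediate: from $r_n\to 3+2\sqrt{2}$, Stolz--Cesàro applied to $\log R_n$ yields $\sqrt[n]{R_n}\to 3+2\sqrt{2}$, so consecutive ratios of $n$-th roots tend to $1$.

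The main obstacle will be calibrating $g(n)$: it must be sharp enough that the interlacing implication persists under the actual polynomial coefficients $p,q,s$ produced by Zeilberger's algorithm, yet simple enough that the inductive step reduces to a tractable polynomial inequality. A secondary hurdle is verifying Xia's inequality near the threshold $n=5$, where the asymptotic arguments are weakest; there, the bounds on $r_n$ must be tightened further or a short finite numerical check invoked in order to bridge the gap between the asymptotic regime and the first index where monotonicity is claimed.
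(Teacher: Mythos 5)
Your overall strategy (two-sided rational bounds on $r_n=R_{n+1}/R_n$ established by induction on a nonlinear ratio recursion, then Xia's criterion for the $n$-th-root part) is exactly the paper's, but your proposal rests on a step that fails: you assume Zeilberger's algorithm delivers a three-term recurrence $p(n)R_{n+1}=q(n)R_n+s(n)R_{n-1}$. The paper emphasizes that creative telescoping yields only a \emph{four}-term (third-order) recurrence for $R_n$, namely $(n+3)R_{n+3}-(7n+13)R_{n+2}+(7n+15)R_{n+1}-(n+1)R_n=0$, and that no three-term recurrence for $R_n$ is known (unlike $S_n$, where one was found by other means). Consequently your ``frozen quadratic'' $\ell^2-6\ell+1=0$ is derived from a nonexistent object; the correct characteristic equation is the cubic $\ell^3-7\ell^2+7\ell-1=(\ell-1)(\ell^2-6\ell+1)=0$, and you must both justify that $r_n$ converges at all (the interlacing supplies this) and rule out the spurious roots $1$ and $3-2\sqrt{2}$.

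The repair is routine but changes the shape of the induction: dividing the four-term recurrence by $R_{n+2}$ gives $r_{n+2}=\frac{7n+13}{n+3}-\frac{7n+15}{(n+3)\,r_{n+1}}+\frac{n+1}{(n+3)\,r_n r_{n+1}}$, so the inductive step must carry hypotheses on \emph{two} consecutive ratios, $b_k<r_k<b_{k+1}<r_{k+1}<b_{k+2}$, rather than the single-step implication you describe. The paper does this with the explicit choice $b_n=3+2\sqrt{2}-\tfrac{3(41\sqrt{2}+58)}{(14\sqrt{2}+20)n}$, reducing each direction of the step to the sign of an explicit quadratic in $k$. Your treatment of the second half is sound and in one respect cleaner than the paper's: once $\sqrt[n]{R_n}\to 3+2\sqrt{2}$ is known, the limit $\sqrt[n+1]{R_{n+1}}/\sqrt[n]{R_n}\to 1$ is immediate, whereas the paper runs a separate logarithmic estimate; and your plan to apply Xia's criterion with the interlacing bounds, plus a finite check near $n=5$, is precisely what the paper does (with $f(n)=b_{n-1}$, $k_0=4$, $N_0=9$, and direct verification for $5\le n\le 8$).
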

Note that all progress and results (mentioned above) related to this subject can only be used to tackle with number theoretic or combinatorial sequence satisfying special expression or three-term recurrence relationship. However, one can not obtain a three-term recurrence for $S_n$ or $R_n$ only by using the Zeilberger's algorithm \cite{a=b,zeil} developed in \cite{chm,holo,fast}.
For example, one can easily acquire  four-term recurrences for $S_n$ and $R_n$ by using the holonomic method in \cite{holo} or the Zeilberger's algorithm \cite{zeil,a=b}, i.e.,
\begin{equation}\label{Rec:Seq-S}
 \begin{split}
 &9(n+1)^2S_n-(19n^2+74n+87)S_{n+1}+(n+3)(11n+29)S_{n+2}\\
 &-(n+3)^2S_{n+3}=0,
 \end{split}
 \end{equation}
 and
  \begin{equation}\label{rec-R}
 (n+3) R_{n+3}-(7 n+13) R_{n+2}+(7 n+15) R_{n+1}-(n+1)R_n=0.
 \end{equation}
However, Sun and Yang found a three-term recurrence relationship for $S_n$ by invoking a result in \cite{GL} and then proved Theorem \ref{Thm:thm-S} by establishing a new criterion for log-convexity and using a criterion for ratio log-concavity due to Chen, Guo and Wang \cite{cgw}.

In the present paper, by establishing a bounds for $R_{n+1}/R_n$ and using the interlacing method and a criterion(Theorem \ref{Thm:criterion-xia}) for log-concavity of the sequence of the form
$\{\sqrt[n]{z_n}\}_{n=1}^\infty$ in \cite{X}, we will completely solve Conjecture \ref{Conj: conjecture_R} on the condition that we do not know a three-term recurrence relationship of $R_n$.
\begin{theo}\emph{(\cite[Theorem 2.1]{X})}
\label{Thm:criterion-xia}
Let $\{z_n\}_{n=0}^\infty$ be a positive sequence. If there exist positive number $k_0$, positive integer $N_0$, and a function $f(n)$ such that $k_0<N_0^2+N_0+2$ and for $n\geq N_0$,
\begin{itemize}
\item[(i)] $0<f(n)<\frac{z_n}{z_{n-1}}<f(n+1);$
\item[(ii)] $\frac{f(n+1)}{f(n+3)}>1-\frac{k_0}{n^2+n+2};$
\item[(iii)]
$\left(1-\frac{k_0}{N_0^2+N_0+2}\right)^{N_0^2+N_0+2}
f^{2N_0}(N_0)>z_{N_0}^2;$
\end{itemize}
then the sequence $\{\sqrt[n]{z_n}\}_{n=N_0}^\infty$ is strictly log-concave.
\end{theo}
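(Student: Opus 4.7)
The plan is to establish strict log-concavity of $\{\sqrt[n]{z_n}\}_{n\geq N_0}$ by reducing it to an auxiliary inequality on $\{z_n\}$ and then verifying that inequality by induction, with (iii) supplying the base case and (i), (ii) driving the inductive step.

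I would first rewrite the target inequality $\sqrt[n]{z_n}\sqrt[n+2]{z_{n+2}}<\bigl(\sqrt[n+1]{z_{n+1}}\bigr)^{2}$ in terms of the consecutive ratios $r_k:=z_k/z_{k-1}$. Raising both sides to the $n(n+1)(n+2)$ power and substituting $z_{n+1}=z_nr_{n+1}$, $z_{n+2}=z_nr_{n+1}r_{n+2}$, the exponent of $z_n$ collapses to $2(n+1)^2-2n(n+2)=2$ and the inequality reduces to the clean equivalent
\[
z_n^{2}\,r_{n+2}^{\,n(n+1)}<r_{n+1}^{\,n(n+3)}.
\]
Applying the bounds from (i), namely $r_{n+1}>f(n+1)$ and $r_{n+2}<f(n+3)$, and then (ii), it therefore suffices to prove the auxiliary inequality
\[
z_n^{\,2}<f(n+1)^{2n}\left(1-\frac{k_0}{n^2+n+2}\right)^{n(n+1)}\quad\text{for every }n\geq N_0. \qquad(\star)
\]

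I would then prove $(\star)$ by induction on $n$. Condition (i) forces $f(n+1)>r_n>f(n)$, so $f$ is strictly increasing, and the constraint $k_0<N_0^2+N_0+2$ keeps each base $1-k_0/(n^2+n+2)$ in $(0,1)$ throughout the induction. The base case $n=N_0$ then follows from (iii): raising the base to the larger exponent $N_0^2+N_0+2$ (as in (iii)) produces a smaller number than raising it to $N_0(N_0+1)$ (as in $(\star)$), and $f(N_0)<f(N_0+1)$ by monotonicity, so (iii) is strictly stronger than what $(\star)$ demands at $N_0$. For the inductive step from $n$ to $n+1$, the upper bound $z_{n+1}<z_nf(n+2)$ from (i) combined with the inductive hypothesis reduces the target to the comparison
\[
\left(\frac{f(n+1)}{f(n+2)}\right)^{2n}\leq\frac{\bigl(1-k_0/((n+1)(n+2)+2)\bigr)^{(n+1)(n+2)}}{\bigl(1-k_0/(n(n+1)+2)\bigr)^{n(n+1)}}.
\]
Since (ii) controls the two-step ratio $f(n+1)/f(n+3)$ rather than the single-step ratio $f(n+1)/f(n+2)$, a cleaner alternative is to run the induction in two-step increments from $n$ to $n+2$ so that (ii) applies directly without passing through an intermediate index.

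The main obstacle will be verifying this inductive comparison uniformly in $n\geq N_0$. Both $(1-k_0/\cdot)^{\cdot}$ factors on the right-hand side tend to $e^{-k_0}$ asymptotically, so the two sides share their leading behavior and one must track the lower-order corrections through a Taylor expansion of $\log(1-x)$, using the uniform bound $k_0<n^2+n+2$ to keep every logarithm well-defined and each estimate uniform. The precise calibration between the exponent $N_0^2+N_0+2$ in (iii) and the denominator $n^2+n+2$ in (ii) is what makes the base case and the inductive step interlock.
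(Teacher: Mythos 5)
The paper never proves this statement---it is imported verbatim from Xia \cite{X} and used as a black box---so your attempt has to stand on its own. Your reduction is correct: raising the log-concavity inequality to the power $n(n+1)(n+2)$ and cancelling does give $z_n^2\,r_{n+2}^{\,n(n+1)}<r_{n+1}^{\,n(n+3)}$, conditions (i) and (ii) are applied in the right directions, and the base case of $(\star)$ at $n=N_0$ does follow from (iii) via $f(N_0)<f(N_0+1)$ and $N_0(N_0+1)<N_0^2+N_0+2$. The genuine gap is the inductive step. Writing $a_n=\bigl(1-\tfrac{k_0}{n^2+n+2}\bigr)^{n(n+1)}$, your step requires $\bigl(f(n+1)/f(n+2)\bigr)^{2n}\le a_{n+1}/a_n$ (or the two-step analogue with $f(n+1)/f(n+3)$ and $a_{n+2}/a_n$). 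But (i) and (ii) only bound these ratios of $f$ from \emph{below} (they say $f$ grows slowly), which pushes your left-hand side \emph{up} toward $1$, while the right-hand side can be strictly less than $1$: with $m=n^2+n+2$ one has $\log a_n=(m-2)\log(1-k_0/m)$, and a short expansion shows $a_n$ is eventually \emph{decreasing} whenever $k_0<4$ (e.g.\ for $k_0=2$, $\log a_9\approx-1.9781>\log a_{10}\approx-1.9820$). Since nothing in the hypotheses prevents $f(n+2)/f(n+1)$ from tending to $1$ arbitrarily fast, the step inequality is not a consequence of (i)--(iii), and the induction cannot close for general $k_0$.

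The repair is to abandon the induction on $(\star)$ and telescope instead: (i) gives $z_n\le z_{N_0}\prod_{j=N_0+1}^{n}f(j+1)\le z_{N_0}\,f(n+1)^{\,n-N_0}$ because $f$ is increasing, so $(\star)$ reduces to $z_{N_0}^2<f(n+1)^{2N_0}a_n$. Now bound $a_n>\bigl(1-\tfrac{k_0}{n^2+n+2}\bigr)^{n^2+n+2}$ (base in $(0,1)$, larger exponent) and use the elementary fact that $m\mapsto(1-k_0/m)^m$ is increasing for $m>k_0$, together with $f(n+1)>f(N_0)$; for every $n\ge N_0$ the whole thing then collapses to exactly hypothesis (iii). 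This non-inductive route is (essentially) Xia's, and it explains why (iii) is calibrated with the exponent $N_0^2+N_0+2$ rather than $N_0(N_0+1)$.
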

Our main result in this paper is as follows:
\begin{theo}
\label{Thm:confirm conj}
Conjecture \ref{Conj: conjecture_R} is true.
\end{theo}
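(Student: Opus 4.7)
The plan is to identify one comparison function $f(n)$ that interlaces the ratio sequence $\{R_{n+1}/R_n\}$ and to use it for both assertions: the monotonicity of $R_{n+1}/R_n$ follows directly from the interlacing inequality, and the log-concavity of $\{\sqrt[n]{R_n}\}$ follows from Theorem \ref{Thm:criterion-xia} applied with the same (or a slightly refined) $f$. The limit $\sqrt[n+1]{R_{n+1}}/\sqrt[n]{R_n}\to 1$ is then automatic via the Cauchy--d'Alembert theorem, since $R_{n+1}/R_n\to 3+2\sqrt{2}$.

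The shape of $f$ is dictated by \eqref{rec-R}. Dividing by $n$ and letting $n\to\infty$ gives the formal characteristic polynomial $x^3 - 7x^2 + 7x - 1 = (x-1)(x^2 - 6x + 1)$, whose dominant root is precisely $3+2\sqrt{2}$. I would therefore try $f(n) = 3+2\sqrt{2} + a/n + b/n^2$, with further correction terms added only if needed, and prove by induction the interlacing bound $f(n) < R_{n+1}/R_n < f(n+1)$ for $n\ge N$. Once this holds, the ratio monotonicity is immediate from the chain $R_{n+1}/R_n < f(n+1) < R_{n+2}/R_{n+1}$. For the induction step, rewrite \eqref{rec-R} as
\begin{equation*}
\frac{R_{n+3}}{R_{n+2}} \;=\; \frac{1}{n+3}\left[(7n+13) - \frac{7n+15}{R_{n+2}/R_{n+1}} + \frac{n+1}{(R_{n+2}/R_{n+1})(R_{n+1}/R_n)}\right],
\end{equation*}
which expresses the new ratio in terms of the two preceding ones. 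Taking extremal values of the two hypothesized ratios within their intervals and clearing denominators, each of the two required bounds reduces to positivity of an explicit polynomial in $n$ that can be verified for $n\ge N$ by elementary analysis, with a small numerical computation covering the base cases.

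Having the interlacing bound in hand, condition (i) of Theorem \ref{Thm:criterion-xia} holds verbatim. Condition (ii), namely $f(n+1)/f(n+3) > 1 - k_0/(n^2+n+2)$, becomes another polynomial positivity check after clearing denominators, for a suitable pair $(k_0,N_0)$ satisfying $k_0 < N_0^2+N_0+2$. Condition (iii) is a direct finite numerical verification at $n=N_0$. This yields log-concavity of $\{\sqrt[n]{R_n}\}_{n\ge N_0}$, and the finitely many cases $5\le n<N_0$ in Conjecture \ref{Conj: conjecture_R} are handled by computing the corresponding ratios directly.

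The main obstacle lies in the very first step. Because \eqref{rec-R} is a genuine four-term recurrence, the existing criteria of Chen--Guo--Wang, Hou et al., and Sun--Yang for three-term recursions do not apply, so there is no mechanical route from the recurrence to an interlacing pair. Choosing $f$ with the right number of $1/n$-correction terms is therefore delicate: too crude a choice cannot be propagated through the two-step induction $r_n,r_{n+1}\mapsto r_{n+2}$, while too refined a choice produces polynomial inequalities too cumbersome to verify cleanly. Pinning down the coefficients $a,b,\ldots$ in $f$, and verifying that the resulting polynomial residues stay positive for all $n\ge N$, is where the substantive work of the proof concentrates.
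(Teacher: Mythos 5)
Your proposal follows essentially the same route as the paper: the paper's Lemma \ref{Lem:bounds-R} establishes exactly the interlacing bound you describe, with $b_n=3+2\sqrt{2}-\frac{3(41\sqrt{2}+58)}{(14\sqrt{2}+20)n}$ (a single $1/n$ correction suffices), proved by the same two-step induction on the ratio form of \eqref{rec-R}, and the log-concavity of $\{\sqrt[n]{R_n}\}$ is likewise obtained from Theorem \ref{Thm:criterion-xia} with $f(n)=b_{n-1}$, $k_0=4$, $N_0=9$, the remaining cases $5\le n\le 8$ being checked numerically. Your plan is correct and matches the paper's argument in all essential respects.
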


In what follows, we will introduce the interlacing method in Section \ref{Sec:section-2}. In Section \ref{Sec:section-3}, a lower bound and an upper bound for $R_{n+1}/R_n$ will be established.
We will give and prove some properties related to the sequence $\{R_n\}_{n=0}^\infty$ in Section \ref{Sec:section-4} and prove Theorem \ref{Thm:confirm conj} therein.

 \section{The interlacing method}
 \label{Sec:section-2}
  The interlacing method can be found in \cite{WL}, yet it was formally considered as a method to solve logarithmic behavior of combinatorial sequences by Do\u{s}li\'{c} and Veljan \cite{DV}, in which it is also called sandwich method.

 To be self-contained in our paper. Let us give a simple introduction to the method.
 Suppose that $\{z_n\}_{n\geq 0}^\infty$ is a sequence of positive numbers. We define the sequence of consecutive quotients, i.e.,
 $$q_n=\frac{z_n}{z_{n-1}},\,\,\,n\geq 1.$$

 By the inequality in \eqref{Ineq:log-cv and log-cc}, the log-convexity or log-concavity of a sequence $\{z_n\}_{n\geq 0}$ is equivalent, respectively, to $q_n\leq q_{n+1}$ or $q_n\geq q_{n+1}$ for all $n\geq 1$. So it suffices to consider whether the sequence $\{q_n\}_{n\geq 1}$ decreases or increases, i.e., the ratio monotonicity.
 To prove $\{q_n\}_{n\geq 1}$ increases(resp. decreases), it is enough to
 find an increasing(resp. a decreasing) sequence $\{b_n\}_{n\geq 0}$ such that
 \begin{equation}\label{Ineq: interlacing mehtod}
 b_{n-1}\leq q_n\leq b_n\,\,(\text{resp.~}b_{n-1}\geq q_n\geq b_n)
 \end{equation}
  holds for all $n\geq 1$, or at least for all $n\geq N$ for some positive integer $N$. Clearly, this implies $q_n\leq q_{n+1}$(resp. $q_n\geq q_{n+1}$ ) since we have $\ldots\leq b_{n-1}\leq q_{n}\leq b_n\leq q_{n+1}\leq b_{n+1}\leq \ldots$(resp. $\ldots\geq b_{n-1}\geq q_{n}\geq b_n\geq q_{n+1}\geq b_{n+1}\geq \ldots$ ) by
  \eqref{Ineq: interlacing mehtod}. As a summary, we have the following proposition.
  \begin{prop}
  \label{Prop:interlacing method}
  Suppose that $\{z_n\}_{n\geq 0}$ is a sequence of positive numbers. Then for some positive integer $N$, the sequence $\{z_n\}_{n\geq N}$ is log-convex(resp. log-concave) if there exists an increasing(resp. a decreasing) sequence $\{b_n\}_{n\geq 0}$ such that
  \begin{equation*}
 b_{n-1}\leq q_n\leq b_n\,\,(\text{resp.~}b_{n-1}\geq q_n\geq b_n)
 \end{equation*}
 holds for $n\geq N+1$.
  \end{prop}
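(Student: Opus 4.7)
The plan is to reduce the claim to a one-line chaining argument on the ratio sequence $q_n=z_n/z_{n-1}$. The starting observation, already recorded in the introduction, is that log-convexity (resp. log-concavity) of $\{z_n\}_{n\ge N}$ is equivalent to the ratio sequence $\{q_n\}_{n\ge N+1}$ being non-decreasing (resp. non-increasing). So it suffices to deduce monotonicity of $\{q_n\}$ from the sandwich hypothesis; nothing else from the definitions in \eqref{Ineq:log-cv and log-cc} is really needed.

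For the log-convex case, I would fix any $n\ge N+1$ and simply concatenate two instances of the sandwich. The upper half at index $n$ gives $q_n\le b_n$. The lower half applied at index $n+1$ (which is legitimate because $n+1\ge N+1$ as well) gives $b_n=b_{(n+1)-1}\le q_{n+1}$. Chaining through the common term yields $q_n\le b_n\le q_{n+1}$, hence $q_n\le q_{n+1}$, which is precisely log-convexity of $\{z_n\}_{n\ge N}$. The log-concave case follows by reversing every inequality in the sandwich and in the chain, and is formally identical.

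There is no genuine obstacle here beyond carefully keeping track of which index each sandwich bound refers to, so the hardest step is cosmetic. I would also note, as a consistency check, that the assumed monotonicity of $\{b_n\}$ is in fact implied by the sandwich itself on the relevant range, since $b_{n-1}\le q_n\le b_n$ applied at successive indices forces $b_{n-1}\le b_n$; the stated monotonicity of $\{b_n\}$ is thus part of the natural packaging of the method rather than a genuinely extra hypothesis. To conclude, I would display the full interlaced chain
\[
\cdots\le b_{n-1}\le q_n\le b_n\le q_{n+1}\le b_{n+1}\le\cdots
\]
(with the inequalities reversed in the log-concave case), which simultaneously justifies the name \emph{interlacing method} and exhibits the desired monotonicity of $\{q_n\}_{n\ge N+1}$.
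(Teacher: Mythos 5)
Your proposal is correct and is essentially identical to the paper's own argument: the paper justifies the proposition by the same interlaced chain $\cdots\le b_{n-1}\le q_n\le b_n\le q_{n+1}\le b_{n+1}\le\cdots$ obtained by concatenating the sandwich at consecutive indices. Your side remark that the monotonicity of $\{b_n\}$ is already forced by the sandwich on the relevant range is a valid (if minor) observation the paper does not make.
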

\section{Bounds for $R_{n+1}/R_n$}
 \label{Sec:section-3}
 In this section, a lemma on the bounds of  $R_{n+1}/R_n$ will be established. This lemma are very important for proof of our main results in the following sequel.

\begin{lemm}
\label{Lem:bounds-R}
Let
\begin{equation*}
\begin{split}
b_n&=3+2\sqrt{2}-\frac{3 (41\sqrt{2}+58)}{(14 \sqrt{2}+20) n}\\
%&=3+2\sqrt{2}-\left(\frac{9}{2}+3\sqrt{2}\right)/n\\
&=\left(3-\frac{9}{2n}\right)+\sqrt{2}\left(2-\frac{3}{n}\right).
\end{split}
\end{equation*}
Then for $n\geq 3$, we have
\begin{equation*}
b_n<r_n<b_{n+1}.
\end{equation*}
\end{lemm}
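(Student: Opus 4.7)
The cornerstone of the argument will be the four-term recurrence \eqref{rec-R}. Writing $r_n=R_{n+1}/R_n$ and dividing \eqref{rec-R} by $R_n$ yields
$$(n+3)\,r_n r_{n+1} r_{n+2} - (7n+13)\,r_n r_{n+1} + (7n+15)\,r_n - (n+1) = 0,$$
which solves for
$$r_{n+2} \;=\; \phi_n(r_n,r_{n+1}) \;:=\; \frac{7n+13}{n+3} - \frac{7n+15}{(n+3)\,r_{n+1}} + \frac{n+1}{(n+3)\,r_n r_{n+1}}.$$
My plan is to prove the pair of inequalities $b_n < r_n < b_{n+1}$ by strong induction on $n$, with the inductive step driven by this explicit expression for $r_{n+2}$.

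For the base I would compute $R_0,R_1,\ldots,R_N$ directly from \eqref{Seq:R} for some small $N$ and verify $b_n < r_n < b_{n+1}$ for $n=3$ and $n=4$ (together with a few extra values to give the induction a cushion). For the inductive step, differentiation gives
$$\partial_x\phi_n(x,y) = -\frac{n+1}{(n+3)\,x^2 y}, \qquad \partial_y\phi_n(x,y) = \frac{(7n+15)x-(n+1)}{(n+3)\,x\,y^2},$$
so $\phi_n$ is decreasing in its first argument, and increasing in its second on any range in which $x>(n+1)/(7n+15)$ — a condition trivially satisfied on $[b_n,b_{n+1}]$ since $b_n\to 3+2\sqrt 2$. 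Assuming the induction hypothesis $b_k < r_k < b_{k+1}$ for $k=n,n+1$, the corner principle for a monotone function on a rectangle then gives
$$\phi_n(b_{n+1},b_{n+1}) \;\le\; r_{n+2} \;\le\; \phi_n(b_n,b_{n+2}),$$
so it is enough to establish the two explicit sandwich estimates
$$b_{n+2} \le \phi_n(b_{n+1},b_{n+1}) \quad\text{and}\quad \phi_n(b_n,b_{n+2}) \le b_{n+3}.$$

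Substituting $b_n = A - B/n$ with $A=3+2\sqrt 2$ and $B=\tfrac{9}{2}+3\sqrt 2$ and clearing denominators, each of these estimates reduces to a polynomial inequality in $n$ with coefficients in $\mathbb Q[\sqrt 2]$. The main obstacle I expect is exactly this algebra: the cross-multiplications produce polynomials of appreciable degree, and I will need to split each inequality into its rational and its $\sqrt 2$-coefficient parts and check that both remain nonnegative beyond a fixed threshold. The asymptotic cancellation at $3+2\sqrt 2$ — reflected in the identity $L^3-7L^2+7L-1=(L-1)(L^2-6L+1)$ — indicates that the leading terms will cooperate, but pinning down the precise threshold $N$, and directly verifying from \eqref{Seq:R} any residual cases $3\le n<N$ that the induction does not cover, is where the bulk of the effort will lie.
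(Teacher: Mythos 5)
Your proposal is correct and follows essentially the same route as the paper: induction on $n$ via the ratio form of the four-term recurrence \eqref{rec-R}, replacing $r_k$ and $r_{k+1}$ by the inductive bounds and reducing each half of the sandwich to an explicit quadratic-in-$n$ sign check with coefficients in $\mathbb{Q}[\sqrt{2}]$. The only (immaterial) differences are that your corner-principle bounds $\phi_n(b_{n+1},b_{n+1})$ and $\phi_n(b_n,b_{n+2})$ are slightly tighter than the paper's term-by-term substitutions (which use $b_kb_{k+1}$ and $b_{k+1}b_{k+2}$ in the last term), and that the paper secures the base by checking $3\le n\le 8$ numerically, which comfortably covers the threshold your step would require.
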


\begin{table}
\centering
\caption{The first values for $b_n,r_n,b_{n+1}$}
\begin{tabular}{c|c|c|c|c|c|c|c}
\hline
\hline
  $r_n,b_n\backslash n$  & 3 & 4 & 5 & 6 & 7 & 8 & 9 \\
  \hline
  $b_{n+1}$& 3.64277 & 4.0799 & 4.37132 & 4.57948 & 4.7356 & 4.85702 & 4.95416\\
  \hline
  $r_n$ & 3.48 & 3.78161 & 4.1307 & 4.41575 &4.62573 & 4.78004 & 4.89728   \\
  \hline
  $b_n$ & 2.91421 & 3.64277 & 4.0799 & 4.37132 & 4.57948 & 4.7356 & 4.85702  \\
  \hline
\end{tabular}\label{table}
\end{table}
\begin{proof}
 Consider that the recurrence relationship \eqref{rec-R} implies that for $n\geq 0$
 \begin{equation}\label{ratio-rec}
\frac{R_{n+3}}{R_{n+2}}=
\frac{7n+13}{n+3}-\frac{7n+15}{n+3}\frac{R_{n+1}}{R_{n+2}}+\frac{n+1}{n+3}\frac{R_n}{R_{n+2}}.
 \end{equation}
Let $r_n=\frac{R_{n+1}}{R_n}$, the recurrence \eqref{ratio-rec} is equivalent to
\begin{equation}
r_{n+2}=\frac{7n+13}{n+3}-
\frac{7n+15}{n+3}\cdot\frac{1}{r_{n+1}}+\frac{n+1}{n+3}\cdot\frac{1}{r_nr_{n+1}}.
\end{equation}
First seeing that $b_n<r_n<b_{n+1}$ holds for $3\leq n\leq 8$ by  Table \ref{table}. So we proceed the proof by induction.
Suppose that $b_n<r_n<b_{n+1}$ for $n\leq k+1$, i.e.,
$$\cdots<b_k<r_k<b_{k+1}<r_{k+1}<b_{k+2}.$$

We first show that
\begin{equation}\label{ineq-1}
r_{k+2}-b_{k+3}=\frac{7k+13}{k+3}-
\frac{7k+15}{k+3}\cdot\frac{1}{r_{k+1}}+\frac{k+1}{k+3}\cdot\frac{1}{r_kr_{k+1}}-b_{k+3}<0.
\end{equation}
Note that $r_n,\,b_n$ are all positive numbers for $n\geq 3$, and by inductive hypothesis, we have
\begin{align*}
&\frac{7k+13}{k+3}-
\frac{7k+15}{k+3}\cdot\frac{1}{r_{k+1}}+\frac{k+1}{k+3}\cdot\frac{1}{r_kr_{k+1}}
-b_{k+3}\\
&<\frac{7k+13}{k+3}-
\frac{7k+15}{k+3}\cdot\frac{1}{b_{k+2}}+\frac{k+1}{k+3}\cdot\frac{1}{b_kb_{k+1}}
-b_{k+3}\\
&=\frac{(7k+13)b_kb_{k+1}b_{k+2}-(7k+15)b_kb_{k+1}+(k+1)b_{k+2}-(k+3)b_kb_{k+1}b_{k+2}b_{k+3}}
{(k+3)b_kb_{k+1}b_{k+2}}.
\end{align*}
The inequality \eqref{ineq-1} is equivalent to show that
\begin{equation}
\begin{split}
&(7k+13)b_kb_{k+1}b_{k+2}-(7k+15)b_kb_{k+1}+(k+1)b_{k+2}-(k+3)b_kb_{k+1}b_{k+2}b_{k+3}\\
&=\frac{3 \left(2 k \left(-4 \left(179+127 \sqrt{2}\right) k+1110 \sqrt{2}+1573\right)-844 \sqrt{2}-1197\right)}{16 k (k+1) (k+2)}\\
&=\frac{-24 \left(179+127 \sqrt{2}\right) k^2+3 \left(3146+2220 \sqrt{2}\right) k-3 \left(1197+844 \sqrt{2}\right)}{16 k (1 + k) (2 + k)}\\
&<0.
\end{split}\label{ineq-2}
\end{equation}
Let $f(x)=ax^2+bx+c$, where $a=-24 \left(179+127 \sqrt{2}\right) $, $b=3 \left(3146+2220 \sqrt{2}\right)$ and $c=-3 \left(1197+844 \sqrt{2}\right).$
Note that the symmetric axis is $x=-\frac{b}{2a}=\frac{3146+2220 \sqrt{2}}{16 \left(179+127 \sqrt{2}\right)}\approx 1.09549$, so for $k\geq 3$, we have that
$f(k)\leq f(3)=-3 (4647 + 3328 \sqrt{2})<0$, which implies the inequality \eqref{ineq-2}. Thus we complete the proof of $r_{n+2}<b_{n+3}$ according to the principle of inductive argument.

Now we have to prove $r_{k+2}>b_{k+2}$. Similarly, according to inductive hypothesis, it suffices to show that
\begin{equation}\label{ineq-3}
\begin{split}
&r_{k+2}-b_{k+2}\\
&=\frac{7k+13}{k+3}-
\frac{7k+15}{k+3}\cdot\frac{1}{r_{k+1}}+\frac{k+1}{k+3}\cdot\frac{1}{r_kr_{k+1}}
-b_{k+2}\\
&>\frac{7k+13}{k+3}-
\frac{7k+15}{k+3}\cdot\frac{1}{b_{k+1}}+\frac{k+1}{k+3}\cdot\frac{1}{b_{k+1}b_{k+2}}
-b_{k+2}\\
&=\frac{(7k+13)b_{k+1}b_{k+2}-(7k+15)b_{k+2}+(k+1)-(k+3)b_{k+1}b_{k+2}^2}
{(k+3)b_{k+1}b_{k+2}}\\
&>0.
\end{split}
\end{equation}
So, to prove $r_{k+2}>b_{k+2}$ is equivalent to show the \eqref{ineq-3} is valid for $k$ greater than some integer $k_0.$

Consider that
\begin{align*}
&(7k+13)b_{k+1}b_{k+2}-(7k+15)b_{k+2}+(k+1)-(k+3)b_{k+1}b_{k+2}^2\\
&=\frac{3 \left(k \left(4 \left(47+33 \sqrt{2}\right) k-370 \sqrt{2}-519\right)-218 \sqrt{2}-305\right)}{8 (k+1) (k+2)^2}\\
&=\frac{\left(564+396 \sqrt{2}\right) k^2-\left(1557+1110 \sqrt{2}\right) k-654 \sqrt{2}-915}{8 (k+1) (k+2)^2}.
\end{align*}
If we let $g(x)=a_1x^2+b_1x+c_1$, where $a_1=\left(564+396 \sqrt{2}\right) ,\,b_1=-\left(1557+1110 \sqrt{2}\right)$ and $c_1=-654 \sqrt{2}-915$, then the symmetric axis of $g(x)$ is $x=-\frac{b_1}{2a_1}=-\frac{1557+1110 \sqrt{2}}{2 \left(915+654 \sqrt{2}\right)}\approx -0.849716.$ Hence, for $k\geq 4$, we have
$g(k)\geq g(4)=9 \left(209+138 \sqrt{2}\right)>0,$ which implies that the inequality \eqref{ineq-3}, i.e., $r_{k+2}\geq b_{k+2}.$

According the above analysis and inductive argument, for all $n\geq 3$,
we have $b_n<r_n<b_{n+1}.$
\qed
\end{proof}
We are now in a position to consider logarithmic behavior of the sequences $\{R_n\}_{n=0}^\infty$.
 \section{Log-behavior of the sequence $\{R_n\}_{n=0}^\infty$}

 By Lemma \ref{Lem:bounds-R} and the sequence $\{b_n\}_{n=1}^\infty$ is strictly increasing, we can first obtain the log-convexity of $\{R_n\}_{n=0}^\infty$.
 \label{Sec:section-4}
\begin{theo}
\label{Thm:R-lcv-ratio-increasing}
The sequence $\{R_n\}_{n=4}^\infty$ is strictly log-convex. Equivalently, the sequence
$\{R_{n+1}/R_n\}_{n=3}^\infty$ is strictly increasing.
\end{theo}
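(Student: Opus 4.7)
The plan is to deduce this statement directly from Lemma \ref{Lem:bounds-R} and the interlacing method encapsulated in Proposition \ref{Prop:interlacing method}. Since Lemma \ref{Lem:bounds-R} already furnishes the two-sided sandwich $b_n < r_n < b_{n+1}$ for all $n \geq 3$, where $r_n = R_{n+1}/R_n$, essentially all the analytic work has been done and only a short wrap-up is required.

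First I would verify that $\{b_n\}_{n\geq 1}$ is strictly increasing. From the closed form
\[
b_n = (3+2\sqrt{2}) - \frac{3(41\sqrt{2}+58)}{(14\sqrt{2}+20)\,n},
\]
one reads off that $b_n = C - D/n$ with $C = 3+2\sqrt{2}$ and $D > 0$, so $b_n$ is strictly increasing in $n$ with $\lim_{n\to\infty} b_n = 3+2\sqrt{2}$. In particular, for any $m \geq 4$ we have $b_{m-1} < b_m$.

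Next, setting $q_n := R_n/R_{n-1}$ as in Section \ref{Sec:section-2}, we have $q_n = r_{n-1}$, and applying Lemma \ref{Lem:bounds-R} at index $n-1 \geq 3$ yields $b_{n-1} < q_n < b_n$ for every $n \geq 4$. Combining this with the strict monotonicity of $\{b_n\}$ gives the interlacing chain
\[
\cdots < b_{n-1} < q_n < b_n < q_{n+1} < b_{n+1} < \cdots,
\]
which is exactly the hypothesis of Proposition \ref{Prop:interlacing method} for log-convexity, taken with $N = 3$ so that the required inequalities hold for all $n \geq N+1 = 4$. The proposition then delivers that $\{R_n\}_{n\geq 4}$ is strictly log-convex, which is equivalent to the strict increase of $\{R_{n+1}/R_n\}_{n\geq 3}$.

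The main obstacle would have been obtaining a closed-form sandwich $(b_n, b_{n+1})$ tight enough to squeeze $r_n$ without appeal to a three-term recurrence for $R_n$; this is precisely the content of Lemma \ref{Lem:bounds-R}, proved by induction from the four-term recurrence \eqref{rec-R}. Once that lemma is in hand the monotonicity of $\{b_n\}$ is transparent, and as a bonus the common limit $\lim_n b_n = 3+2\sqrt{2}$ already matches the value predicted in Conjecture \ref{Conj: conjecture_R}, paving the way for the remaining part of Theorem \ref{Thm:confirm conj}.
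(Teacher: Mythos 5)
Your proof is correct and follows essentially the same route as the paper: both deduce the theorem from the sandwich $b_{n} < r_n < b_{n+1}$ of Lemma \ref{Lem:bounds-R} together with the strict monotonicity of $\{b_n\}$ (clear from $b_n = C - D/n$ with $D>0$) and Proposition \ref{Prop:interlacing method}. Your write-up is if anything slightly more careful about the index bookkeeping than the paper's own one-line argument.
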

\begin{proof}
Consider that $R_3^2-R_2R_1=25^2-7\cdot 87=16>0$, and by Lemma \ref{Lem:bounds-R},
we have that for $n\geq 3$,
$$\ldots<b_n<r_n=\frac{R_{n+1}}{R_n}<b_{n+1}<r_{n+1}<b_{n+2}<\ldots.$$
This arrives at the sequence $\{r_n\}_{n=3}^\infty$ is strictly increasing.
Thus it implies that $\{R_n\}_{n=4}^\infty$ is log-convex by Proposition \ref{Prop:interlacing method}.
\qed
\end{proof}

What's more, note that
\begin{align*}
\lim_{n\rightarrow\infty}b_n=3+2\sqrt{2},
\,\,\,\lim_{n\rightarrow\infty}
b_{n+1}=3+2\sqrt{2}.
\end{align*}
It follows easily the following result.
\begin{theo}
\label{Thm:limit-r-n}
\begin{equation*}
\lim_{n\rightarrow\infty}\frac{R_{n+1}}{R_n}=3+2\sqrt{2}. \end{equation*}
\end{theo}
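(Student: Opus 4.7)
The plan is to deduce the limit by a direct squeeze argument, since the hard analytic work has already been carried out in Lemma \ref{Lem:bounds-R}. From that lemma we have, for every $n\geq 3$,
\[
b_n < r_n < b_{n+1},
\]
where $r_n = R_{n+1}/R_n$ and $b_n = (3 - 9/(2n)) + \sqrt{2}\,(2 - 3/n)$. So the entire proof reduces to computing the limits of the two bounding sequences.

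First I would record the explicit limits of the envelope. Since $b_n$ is a sum of two elementary expressions in $1/n$, one immediately sees
\[
\lim_{n\to\infty} b_n = \lim_{n\to\infty}\left(3 - \frac{9}{2n}\right) + \sqrt{2}\lim_{n\to\infty}\left(2 - \frac{3}{n}\right) = 3 + 2\sqrt{2},
\]
and by the same reasoning $\lim_{n\to\infty} b_{n+1} = 3 + 2\sqrt{2}$. Next I would invoke the standard squeeze (sandwich) theorem: because $b_n < r_n < b_{n+1}$ holds for all $n\geq 3$ and both bounding sequences tend to the common limit $3+2\sqrt{2}$, the sequence $\{r_n\}$ must also converge, and to the same value. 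This yields
\[
\lim_{n\to\infty}\frac{R_{n+1}}{R_n} = 3 + 2\sqrt{2},
\]
as desired.

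There is essentially no obstacle at this stage; the substantive difficulty lay in proving Lemma \ref{Lem:bounds-R}, namely in finding the bounding sequence $b_n$ that has the correct limit $3+2\sqrt{2}$ and for which an induction driven by the four-term recurrence \eqref{rec-R} can be closed. Once that envelope is in hand, extracting the limit is a one-line application of the squeeze theorem, so the proof of Theorem \ref{Thm:limit-r-n} will be only a few lines long and will not require any further combinatorial or analytic machinery beyond what already appears in Section \ref{Sec:section-3}.
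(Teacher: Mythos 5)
Your proposal is correct and matches the paper exactly: the paper also obtains the limit by noting that both $b_n$ and $b_{n+1}$ tend to $3+2\sqrt{2}$ and then squeezing $r_n$ between them via Lemma \ref{Lem:bounds-R}. No further comment is needed.
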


\begin{coro}\label{Cor:R_kaifang-increase}
The sequence $\{\sqrt[n]{R_n}\}_{n=1}^\infty$ is strictly increasing. Moreover,
\begin{equation}\label{Lim:kaifang-limit}
\lim_{n\rightarrow\infty}\sqrt[n]{R_n}=3+2\sqrt{2}.
\end{equation}
\end{coro}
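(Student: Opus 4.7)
The plan is to reduce the monotonicity assertion to the single inequality $r_n^{\,n}>R_n$ (with $r_n=R_{n+1}/R_n$), to close this by a short induction built on the log-convexity already secured in Theorem~\ref{Thm:R-lcv-ratio-increasing}, and to obtain the limit from Theorem~\ref{Thm:limit-r-n} via the classical Cauchy--d'Alembert principle.

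First I would record the elementary equivalence
\[
\sqrt[n+1]{R_{n+1}}>\sqrt[n]{R_n}\;\Longleftrightarrow\;R_{n+1}^{\,n}>R_n^{\,n+1}\;\Longleftrightarrow\;r_n^{\,n}>R_n,
\]
obtained by raising to the $n(n+1)$-th power. Thus it suffices to prove $r_n^{\,n}>R_n$ for every $n\ge 1$. For $n=1,2$ I would verify this by hand from the initial values $R_1=1$, $R_2=7$, $R_3=25$, $R_4=87$ (computed from \eqref{Seq:R} or the recurrence \eqref{rec-R}); the chain $1<\sqrt{7}<\sqrt[3]{25}$ settles the first two comparisons immediately. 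For $n\ge 3$ I would proceed by induction. The base case $n=3$ is the direct arithmetic check $r_3^{\,3}=(87/25)^3>25=R_3$. For the inductive step, suppose $r_n^{\,n}>R_n$ with $n\ge 3$; since Theorem~\ref{Thm:R-lcv-ratio-increasing} gives $r_{n+1}\ge r_n$, one obtains
\[
r_{n+1}^{\,n+1}\;\ge\;r_n^{\,n+1}\;=\;r_n\cdot r_n^{\,n}\;>\;r_n\cdot R_n\;=\;R_{n+1},
\]
which closes the induction and establishes that $\{\sqrt[n]{R_n}\}_{n\ge 1}$ is strictly increasing.

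For the limit, Theorem~\ref{Thm:limit-r-n} yields $R_{n+1}/R_n\to 3+2\sqrt{2}$, and the classical fact that $\lim_{n\to\infty}\sqrt[n]{a_n}=\lim_{n\to\infty}a_{n+1}/a_n$ whenever the latter limit exists, applied to the positive sequence $\{R_n\}_{n\ge 1}$, gives $\sqrt[n]{R_n}\to 3+2\sqrt{2}$. I do not expect any serious obstacle: the log-convex monotonicity $r_{n+1}\ge r_n$ from Theorem~\ref{Thm:R-lcv-ratio-increasing} carries the induction, and the only numerical input required is the cheap base case $r_3^{\,3}>R_3$.
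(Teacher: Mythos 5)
Your proposal is correct, and for the monotonicity part it takes a genuinely different and arguably cleaner route than the paper. You reduce $\sqrt[n+1]{R_{n+1}}>\sqrt[n]{R_n}$ to $r_n^{\,n}>R_n$ and close this by induction: from $r_{n+1}>r_n$ (Theorem \ref{Thm:R-lcv-ratio-increasing}) and the hypothesis $r_n^{\,n}>R_n$ you get
\[
r_{n+1}^{\,n+1}>r_n^{\,n+1}=r_n\cdot r_n^{\,n}>r_n R_n=R_{n+1},
\]
so a single base case $r_3^{\,3}=(87/25)^3>25=R_3$ plus the two hand checks at $n=1,2$ suffices. The paper instead telescopes the product $R_n=\prod_{k}(R_{k+1}/R_k)$ against the increasing ratios to obtain $R_n<R_3\,r_n^{\,n-2}$, then needs the quantitative input $r_n\geq R_{12}/R_{11}>5=\sqrt{R_3}$ to conclude $R_n^{\,n+1}<R_{n+1}^{\,n}$ only for $n\geq 11$, and must verify the remaining cases $1\leq n\leq 10$ by explicit computation of $R_n^{\,n+1}-R_{n+1}^{\,n}$. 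Your induction buys uniformity: it needs no threshold like $n\geq 11$, no comparison of $r_n$ with $\sqrt{R_3}$, and no list of ten numerical verifications, at the cost of nothing beyond the same log-convexity theorem the paper already invokes. For the limit both arguments are identical, citing the Cauchy--d'Alembert fact (Rudin, \S 3.37) that $\lim\sqrt[n]{z_n}=\lim z_{n+1}/z_n$ when the latter exists, together with Theorem \ref{Thm:limit-r-n}.
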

\begin{proof}
By Theorem \ref{Thm:R-lcv-ratio-increasing}, we have
\begin{equation*}
\frac{R_{n+1}}{R_n}>\frac{R_n}{R_{n-1}},\,\,\mbox{for}\,\,n\geq 3.
\end{equation*}
With the fact $R_1=1$, we deduce that for $n\geq 1$,
\begin{equation}\label{Inequality-1}
R_n=\frac{R_2}{R_1}\cdot \frac{R_3}{R_2}\left[\cdot R_1 \cdot\frac{R_4}{R_3} \cdots\frac{R_n}{R_{n-1}}\right]<R_3\left(\frac{R_{n+1}}{R_n}\right)^{n-2}.\\
\end{equation}
For $n\geq 11$, we have
\begin{equation}\label{Inequality-2}
\frac{R_{n+1}}{R_n}\geq\frac{R_{12}}{R_{11}}
=\frac{16421831}{3242377}>5=\sqrt{R_3}.
\end{equation}
Combining these inequalities in \eqref{Inequality-1} and \eqref{Inequality-2}, it follows that
\begin{equation*}
R_n^{n+1}<R_{n+1}^n, \text{~for~} n\geq 11.
\end{equation*}
This is equivalent to
\begin{equation*}
(R_n^{n+1})^{\frac{1}{n(n+1)}}
<(R_{n+1}^n)^{\frac{1}{n(n+1)}},\text{~for~} n\geq 11.
\end{equation*}
That is,
\begin{equation*}
\sqrt[n]{R_n}<\sqrt[n+1]{R_{n+1}}, \text{~for~}n\geq 11.
\end{equation*}
For $1\leq n\leq 10$, one can simply prove $R_n^{n+1}<R_{n+1}^n$ by computing the value of $R_n^{n+1}-R_{n+1}^n$. Here are some examples,
\begin{align*}
&R_1^{2}-R_{2}=1-7=-6;\\
&R_2^{3}-R_{3}^2=343-625=-282;\\
&R_3^{4}-R_{4}^3=390625-658503=-267878; \\
&R_4^{5}-R_{5}^4=4984209207-1268163904241521=-6731904874;\\
&R_5^{6}-R_{6}^5= 1268163904241521-1268163904241521=-3367343548629278.
\end{align*}
Moreover, recall that for a real sequence $\{z_n\}_{n=1}^\infty$ with positive numbers, it was shown that
\begin{equation}\label{in-1}
\lim_{n\rightarrow\infty}\inf\frac{z_{n+1}}{z_n}\leq \lim_{n\rightarrow\infty}\inf\sqrt[n]{z_n}, \end{equation}
and
\begin{equation}\label{in-2}
\lim_{n\rightarrow\infty}\sup\sqrt[n]{z_n}\leq \lim_{n\rightarrow\infty}\sup\frac{z_{n+1}}{z_n}, \end{equation}
see Rudin \cite[\S 3.37]{R}. The inequalities in \eqref{in-1} and \eqref{in-2} implies that
$$\lim_{n\rightarrow\infty}\sqrt[n]{z_n}=
\lim_{n\rightarrow\infty}\frac{z_n}{z_{n-1}}$$ if $\lim_{n\rightarrow\infty}\frac{z_n}{z_{n-1}}$ exists. By Theorem \ref{Thm:limit-r-n}, it follows \eqref{Lim:kaifang-limit}.

This completes the proof.
\qed
\end{proof}

\begin{theo}\label{Thm:limit-ratio-kaifang}
\begin{equation*}
\lim_{n\rightarrow \infty}\frac{\sqrt[n+1]{R_{n+1}}}{\sqrt[n]{R_n}}
=1.
\end{equation*}
\end{theo}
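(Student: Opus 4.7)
The plan is essentially to quote Corollary~\ref{Cor:R_kaifang-increase} and apply the algebra of limits. That corollary establishes
\[
\lim_{n\to\infty}\sqrt[n]{R_n}=3+2\sqrt{2},
\]
and since $3+2\sqrt{2}$ is a finite positive real number, the same holds for the shifted sequence: setting $m=n+1$ gives $\lim_{n\to\infty}\sqrt[n+1]{R_{n+1}}=\lim_{m\to\infty}\sqrt[m]{R_m}=3+2\sqrt{2}$.

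First I would state these two limit facts explicitly, citing Corollary~\ref{Cor:R_kaifang-increase} for both. Then, because the denominator limit is nonzero, the standard quotient rule for limits yields
\[
\lim_{n\to\infty}\frac{\sqrt[n+1]{R_{n+1}}}{\sqrt[n]{R_n}}
=\frac{\lim_{n\to\infty}\sqrt[n+1]{R_{n+1}}}{\lim_{n\to\infty}\sqrt[n]{R_n}}
=\frac{3+2\sqrt{2}}{3+2\sqrt{2}}=1,
\]
which is exactly the desired conclusion.

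There is no real obstacle here: all of the work has already been done in establishing Corollary~\ref{Cor:R_kaifang-increase} (which itself leverages Theorem~\ref{Thm:limit-r-n} via the Rudin-type sandwich between $\liminf z_{n+1}/z_n$ and $\limsup\sqrt[n]{z_n}$). The only subtlety worth flagging in the write-up is that one needs the denominator limit $3+2\sqrt{2}$ to be strictly positive so that the quotient rule applies, but this is immediate. Consequently the proof should be no more than two or three lines of display-math.
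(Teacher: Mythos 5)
Your proposal is correct, and it is genuinely simpler than the paper's own argument. The paper does not divide the two limits; instead it writes $R_{n+1}=R_3\prod_{k=3}^n r_k$, uses Lemma~\ref{Lem:bounds-R} to sandwich $R_{n+1}$ between $R_3\prod_{k=3}^n b_k$ and $R_3\prod_{k=3}^n b_{k+1}$, and then bounds $\log\bigl(\sqrt[n+1]{R_{n+1}}/\sqrt[n]{R_n}\bigr)$ above and below by explicit expressions involving $\sum\log b_k$, whose limits are evaluated (with the aid of Mathematica) to be $0$. Your route — citing Corollary~\ref{Cor:R_kaifang-increase} for $\lim_{n\to\infty}\sqrt[n]{R_n}=3+2\sqrt{2}$, observing the shifted sequence has the same (finite, nonzero) limit, and applying the quotient rule — is legitimate and non-circular, since the corollary is proved before this theorem and does not depend on it. What your approach buys is a two-line proof with no computer-algebra computation and no need to re-invoke the interlacing bounds; what the paper's approach buys is, arguably, nothing extra here, since the key analytic input (the Rudin inequalities relating $\liminf/\limsup$ of ratios and roots) has already been spent in proving the corollary. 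The only point to make explicit in your write-up is the re-indexing step showing $\lim_{n\to\infty}\sqrt[n+1]{R_{n+1}}=\lim_{m\to\infty}\sqrt[m]{R_m}$, which you do flag.
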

\begin{proof}
Not that for $n\geq 3$
\begin{equation*}
R_{n+1}=R_3\prod_{k=3}^nr_k.
\end{equation*}
Hence by Lemma \ref{Lem:bounds-R}, it follows that
\begin{equation*}
R_3\prod_{k=3}^nb_k<R_{n+1}<R_3\prod_{k=3}^nb_{k+1}.
\end{equation*}

Consider that
\begin{equation*}
\begin{split}
\log\left(\frac{\sqrt[n+1]{R_{n+1}}}{\sqrt[n]{R_n}}\right)
&=\frac{\log\left.R_{n+1}\right.}{n+1}-\frac{\log\left.R_{n}\right.}{n}\\
&<\frac{\log\left(R_3\prod_{k=3}^nb_{k+1}\right)}{n+1}-\frac{\log\left(R_3\prod_{k=3}^{n-1}b_k\right)}{n}\\
&=\frac{\log R_3+\sum_{k=3}^n\log b_{k+1}}{n+1}-\frac{\log R_3 +\sum_{k=3}^{n-1}\log b_k}{n}
\end{split}
\end{equation*}
and
\begin{equation*}%\label{log-ratio-lim-R}
\begin{split}
\log\left(\frac{\sqrt[n+1]{R_{n+1}}}{\sqrt[n]{R_n}}\right)
&=\frac{\log\left.R_{n+1}\right.}{n+1}-\frac{\log\left.R_{n}\right.}{n}\\
&>\frac{\log\left(R_3\prod_{k=3}^nb_k\right)}{n+1}-\frac{\log\left(R_3\prod_{k=3}^{n-1}b_{k+1}\right)}{n}\\
&=\frac{\log R_3+\sum_{k=3}^n\log b_k}{n+1}-\frac{\log R_3 +\sum_{k=3}^{n-1}\log b_{k+1}}{n}.
\end{split}
\end{equation*}
By using mathematical software {\tt{Mathematica 10.0}}, we can obtain that
\begin{align*}
&\lim_{n\rightarrow \infty}\left(\frac{\log R_3+\sum_{k=3}^n\log b_{k+1}}{n+1}-\frac{\log R_3 +\sum_{k=3}^{n-1}\log b_k}{n}\right)=0,\\
&\lim_{n\rightarrow \infty}\left(\frac{\log R_3+\sum_{k=3}^n\log b_k}{n+1}-\frac{\log R_3 +\sum_{k=3}^{n-1}\log b_{k+1}}{n}\right)=0.
\end{align*}
The above two limits force that
$$\lim_{n\rightarrow \infty}\log\left(\frac{\sqrt[n+1]{R_{n+1}}}{\sqrt[n]{R_n}}\right)=0,$$
which implies
$$\lim_{n\rightarrow \infty}\frac{\sqrt[n+1]{R_{n+1}}}{\sqrt[n]{R_n}}=1.$$
\qed
\end{proof}
\begin{theo}\label{Thm:kaifang-lcc}
The sequence $\{\sqrt[n]{R_n}\}_{n=5}^\infty$ is strictly log-concave. Equivalently, the sequence $\{\frac{\sqrt[n+1]{R_{n+1}}}{\sqrt[n]{R_n}}\}_{n=5}^\infty$ is strictly decreasing.
\end{theo}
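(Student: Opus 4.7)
The strategy is to apply Xia's criterion (Theorem~\ref{Thm:criterion-xia}) to $z_n = R_n$ using the function $f(n) = b_{n-1}$, where $\{b_n\}$ is the sequence already constructed in Lemma~\ref{Lem:bounds-R}. A preliminary rationalization of the denominator in the definition of $b_n$ gives the much cleaner closed form
$$b_n = (3+2\sqrt{2})\left(1 - \tfrac{3}{2n}\right),$$
which will make the verification of Xia's hypotheses essentially mechanical.

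I then plan to check the three hypotheses in order. Hypothesis (i), namely $f(n) < R_n/R_{n-1} < f(n+1)$, is exactly Lemma~\ref{Lem:bounds-R} after shifting indices by one. For hypothesis (ii), a one-line computation with the closed form above gives
$$\frac{b_n}{b_{n+2}} = 1 - \frac{6}{n(2n+1)},$$
so (ii) becomes $k_0 > 6(n^2+n+2)/(n(2n+1))$. The right-hand side is strictly decreasing in $n$ and tends to $3$, and the side constraint $k_0 < N_0^2+N_0+2$ is harmless for any $N_0 \geq 5$; thus (ii) is arranged by taking $k_0$ slightly above the value of $6(n^2+n+2)/(n(2n+1))$ at $n = N_0$.

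For hypothesis (iii), I will pick a small but workable $N_0$ together with a suitable $k_0$; numerical experimentation suggests that $N_0 = 8$ with $k_0 = 10/3$ (which is greater than $111/34 = 6 \cdot 74/(8\cdot 17)$) is enough. The required inequality
$$\left(1 - \tfrac{10/3}{74}\right)^{74} b_7^{16} > R_8^{2}$$
will be verified by explicit arithmetic, using $b_7 = 11(3+2\sqrt{2})/14$ and the integer values $R_0,\ldots,R_8$ obtained from the four-term recurrence~\eqref{rec-R} (in particular $R_8 = 27759$). Xia's theorem will then deliver strict log-concavity of $\{\sqrt[n]{R_n}\}_{n=8}^\infty$, which is the inequality $(\sqrt[n]{R_n})^2 > \sqrt[n-1]{R_{n-1}}\sqrt[n+1]{R_{n+1}}$ for every $n \geq 9$.

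It remains to cover the three middle indices $n = 6, 7, 8$, which I will dispatch one by one by direct numerical verification using the integer values $R_5, \ldots, R_9$ produced from \eqref{rec-R}. Combining these checks with Xia's output gives strict log-concavity of $\{\sqrt[n]{R_n}\}_{n=5}^\infty$, equivalent to the claim about the ratio sequence. In my view the main obstacle is the quantitative tightness of both hypothesis (iii) and the three small-$n$ inequalities: the slack is only a modest multiplicative factor (in fact the base-case inequalities are tight to within a few thousandths on the logarithmic scale), so the numerical work must be carried out carefully. Beyond this, no new conceptual ingredient beyond Xia's criterion and the bounds provided by Lemma~\ref{Lem:bounds-R} is required.
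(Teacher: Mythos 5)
Your proposal is correct and follows essentially the same route as the paper: apply Xia's criterion with $f(n)=b_{n-1}$ taken from Lemma \ref{Lem:bounds-R}, then verify the remaining small indices directly. The only difference is your sharper choice $k_0=10/3$, which makes condition (iii) succeed already at $N_0=8$ (the paper's $k_0=4$ fails there, forcing $N_0=9$ and one extra base case); your identity $f(n+1)/f(n+3)=1-\tfrac{6}{n(2n+1)}$ agrees with the paper's, and the numerical inequality at $N_0=8$ with $R_8=27759$ does check out.
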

\begin{proof}
We will prove it by Theorem \ref{Thm:criterion-xia}. To keep notation in Theorem \ref{Thm:criterion-xia}, we let
$f(n)=b_{n-1}=\sqrt{2} \left(2-\frac{3}{n-1}\right)-\frac{9}{2 (n-1)}+3$.
First, by Lemma \ref{Lem:bounds-R}, we know that
\begin{equation*}
0<f(n)<\frac{R_n}{R_{n-1}}<f(n+1),\text{~for~}n\geq 5.
\end{equation*}
Note that
$$\frac{f(n+1)}{f(n+3)}=\frac{12}{2 n+1}-\frac{6}{n}+1$$
and
\begin{align*}
&\left(\frac{12}{2 n+1}-\frac{6}{n}+1\right)-
\left(1-\frac{4}{n^2+n+2}\right)\\
&\quad=\frac{2 (n-3) (n+2)}{n (2 n+1) \left(n^2+n+2\right)}\\
&\quad >0, \text{~for~}n\geq 4.
\end{align*}
So, taking $k_0=4$, the condition (ii) in Theorem \ref{Thm:criterion-xia} is satisfied.

Moreover, consider that
\begin{align*}
\left(1-\frac{4}{8^2+8+2}\right)^{8^2+8+2}
f^{16}(8)-R_{8}^2=-1.5798\times 10^8
\end{align*}
and
\begin{align*}
\left(1-\frac{4}{9^2+9+2}\right)^{9^2+9+2}
f^{18}(9)-R_{9}^2=6.41905\times 10^9.
\end{align*}
Therefore, let $N_0=9, k_0=4, f(n)=b(n-1)$, all the conditions (i), (ii) and (iii) in Theorem \ref{Thm:criterion-xia} can be satisfied. This implies that the sequence
$\{\sqrt[n]{R_n}\}_{n=9}^\infty$ is strictly log-concave, which is equivalent to $\{\frac{\sqrt[n+1]{R_{n+1}}}{\sqrt[n]{R_n}}\}_{n=9}^\infty$
is strictly decreasing by Proposition \ref{Prop:interlacing method}.

However, one can verify that for $5\leq n\leq 8$,
\begin{align*}
\frac{\sqrt[n+1]{R_{n+1}}}{\sqrt[n]{R_n}}>\frac{\sqrt[n+2]{R_{n+2}}}{\sqrt[n+1]{R_{n+1}}},
\end{align*}
since
\begin{align*}
&\frac{\sqrt[6]{R_{6}}}{\sqrt[5]{R_5}}
-\frac{\sqrt[7]{R_{7}}}{\sqrt[6]{R_{6}}}\approx0.00293164,&
&\frac{\sqrt[7]{R_{7}}}{\sqrt[6]{R_6}}
-\frac{\sqrt[8]{R_{8}}}{\sqrt[7]{R_{7}}}\approx0.00445875,\\
&\frac{\sqrt[8]{R_{8}}}{\sqrt[7]{R_7}}
-\frac{\sqrt[9]{R_{9}}}{\sqrt[8]{R_{8}}}\approx0.00452784,&
&\frac{\sqrt[9]{R_{9}}}{\sqrt[8]{R_8}}
-\frac{\sqrt[10]{R_{10}}}{\sqrt[9]{R_{9}}}\approx0.00404051.
\end{align*}
Thus this completes the proof.
\qed
\end{proof}
\begin{remark}
We prove Theorem \ref{Thm:kaifang-lcc} by invoking Theorem \ref{Thm:criterion-xia} due to Xia \cite{X}. Actually, this method can also be use to prove the log-concavity of the sequence $\{\sqrt[n]{S_n}\}_{n=1}^\infty$ in Theorem \ref{Thm:thm-S}, which was proved by using the method in \cite{cgw}.
\end{remark}

Now we are ready to proof of Theorem \ref{Thm:confirm conj}.

\emph{Proof of Theorem \ref{Thm:confirm conj}.}
By Theorem \ref{Thm:R-lcv-ratio-increasing} and \ref{Thm:limit-r-n}, we confirm the first part of Conjecture \ref{Conj: conjecture_R}.
Moreover, Theorem \ref{Thm:kaifang-lcc} and \ref{Thm:limit-ratio-kaifang} imply the second part of Conjecture \ref{Conj: conjecture_R}.

We are done.
\qed
\begin{conj}
The sequence $\{r_n\}_{n\geq 4}$ is log-concave, i.e., $R_n$ is ratio log-concave for $n\geq 4.$
\end{conj}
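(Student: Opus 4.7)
We want to prove $r_n^2 \ge r_{n-1} r_{n+1}$ for all sufficiently large $n$ (say $n \ge N_0$), with the finitely many remaining cases $4 \le n < N_0$ verified by direct computation of $R_n$. The first observation is that the bounds of Lemma \ref{Lem:bounds-R} alone will not suffice. Writing $r_n = A + B/n + O(1/n^2)$ with $A = 3+2\sqrt{2}$ and $B = -\gamma$, $\gamma = \frac{3(41\sqrt{2}+58)}{14\sqrt{2}+20}>0$, a direct expansion gives
$$r_n^2 - r_{n-1} r_{n+1} = -\frac{2AB}{n^3} + O(1/n^4) = \frac{2A\gamma}{n^3} + O(1/n^4),$$
whereas the window width $b_{n+1}-b_n$ is only of order $1/n^2$, too crude by one order. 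The first step is therefore to sharpen the bounds.

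The plan is to re-run the inductive proof of Lemma \ref{Lem:bounds-R} with a refined two-parameter ansatz
$$\tilde b_n = A - \frac{\gamma}{n} + \frac{\delta}{n^2} - \frac{\lambda}{n^3}, \qquad \tilde B_n = A - \frac{\gamma}{n} + \frac{\delta}{n^2} + \frac{\lambda}{n^3},$$
where $\delta$ is the constant determined by substituting $r_n = A - \gamma/n + \delta/n^2 + O(1/n^3)$ into the rearranged form
$$r_{n+2} = \frac{7n+13}{n+3} - \frac{7n+15}{(n+3)\,r_{n+1}} + \frac{n+1}{(n+3)\,r_n r_{n+1}}$$
of \eqref{rec-R} and matching the $1/n^2$ coefficient on both sides, while $\lambda>0$ is a slack parameter to be chosen. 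The inductive step $\tilde b_n \le r_n \le \tilde B_n \Rightarrow \tilde b_{n+2} \le r_{n+2} \le \tilde B_{n+2}$ then proceeds exactly as in the proof of Lemma \ref{Lem:bounds-R}: substitute the hypothesis on $r_n$ and $r_{n+1}$ into the recurrence, clear denominators, and reduce each inequality to the non-negativity of an explicit polynomial in $n$ whose coefficients depend polynomially on $\lambda$ and rationally on $\sqrt{2}$. For $\lambda$ small enough, the leading coefficient of each such polynomial has the desired sign and the remaining inequality is valid beyond an explicit threshold.

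Given the refined bounds, log-concavity of $\{r_n\}_{n \ge N_0}$ would follow from the single inequality $\tilde b_n^2 \ge \tilde B_{n-1}\,\tilde B_{n+1}$, which yields $r_n^2 > \tilde b_n^2 \ge \tilde B_{n-1}\tilde B_{n+1} > r_{n-1} r_{n+1}$. An expansion of this difference gives
$$\tilde b_n^2 - \tilde B_{n-1}\tilde B_{n+1} = \frac{2A\gamma - 4A\lambda}{n^3} + O(1/n^4),$$
which is positive to leading order exactly when $\lambda < \gamma/2$. The main obstacle is the joint calibration of $\lambda$ and $N_0$: shrinking $\lambda$ makes the final log-concavity comparison comfortable, but simultaneously tightens the inductive step in the refined-bounds proof, so both must close at the same threshold. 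If this balance proves too delicate at order $1/n^3$, I would iterate the refinement one more step and work with bounds matching $r_n$ through $1/n^4$; the polynomial identities grow in size but their structure is identical, and a computer algebra system can handle them. Combining the interlacing-style conclusion of step three with a direct numerical verification of $r_n^2 \ge r_{n-1}r_{n+1}$ on $\{4, 5, \dots, N_0-1\}$ would then complete the proof.
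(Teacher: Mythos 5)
The paper offers no proof of this statement: it appears only as an open conjecture in the final section, so there is no argument of the author's to compare yours against. Judged on its own terms, your proposal is a sensible and plausibly workable \emph{program}, and its asymptotic bookkeeping is correct. Indeed, from $b_n<r_n<b_{n+1}$ one gets $r_n=A-\gamma/n+O(1/n^2)$ with $A=3+2\sqrt2$ and $\gamma=\tfrac92+3\sqrt2>0$, and your expansion $r_n^2-r_{n-1}r_{n+1}=2A\gamma/n^3+O(1/n^4)$ is right; so is your observation that the width $b_{n+1}-b_n\asymp\gamma/n^2$ of the paper's window is one order too coarse, and your leading-order condition $\lambda<\gamma/2$ for $\tilde b_n^2-\tilde B_{n-1}\tilde B_{n+1}>0$ checks out. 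There is also a structural reason to believe the refined induction can close: the characteristic polynomial of \eqref{rec-R} is $(x-1)(x^2-6x+1)$, so the subdominant roots $1$ and $3-2\sqrt2$ give the linearized ratio map multipliers $1/A\approx0.17$ and $1/A^2\approx0.03$ at the fixed point $A$, i.e.\ the recurrence contracts perturbations strongly, which is what an $O(1/n^3)$-wide sandwich needs.

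That said, what you have written is not a proof. Every step that could actually fail is deferred: the value of $\delta$ is not computed; the inductive step with the refined ansatz is asserted to ``proceed exactly as in the proof of Lemma \ref{Lem:bounds-R}'' but the resulting polynomial inequalities (whose signs now depend on $\lambda$ and on lower-order terms you have not controlled) are not exhibited, and it is precisely at order $1/n^3$ and below that the argument is delicate; the threshold $N_0$ is never pinned down; and the base cases are not verified. You also hedge by saying that if the $1/n^3$ calibration fails you would pass to $1/n^4$, which concedes that the argument as stated may not close. Until the explicit polynomials in $k$ (with coefficients in $\mathbb{Q}[\sqrt2,\lambda]$) are written down and their signs established past an explicit threshold, and the finitely many initial cases are checked, this remains a credible plan rather than a resolution of the conjecture.
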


\begin{conj}
The sequence $\{R_n^2-R_{n+1}R_{n-1}\}_{n\geq 6}$ is $\infty$-log-concave.
\end{conj}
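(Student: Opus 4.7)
\emph{Proof proposal.}  The first step is to fix the sign convention: by Theorem~\ref{Thm:R-lcv-ratio-increasing} the sequence $R_n$ is log-convex for $n\geq 4$, so $R_n^2-R_{n+1}R_{n-1}<0$ for $n\geq 5$, and the conjecture really asserts the $\infty$-log-concavity of the positive sequence $A_n:=R_{n+1}R_{n-1}-R_n^2$.  The useful identity
\[
A_n \;=\; R_n^2\,\frac{r_n-r_{n-1}}{r_{n-1}},\qquad
\frac{A_{n+1}}{A_n} \;=\; r_n\,r_{n-1}\cdot\frac{r_{n+1}-r_n}{r_n-r_{n-1}},
\]
converts every bound on $r_n=R_{n+1}/R_n$ into a bound on the $A$-ratios, and will be the bridge between Lemma~\ref{Lem:bounds-R} and the desired statement.

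The plan is to attack the problem in two stages.  Stage one establishes plain log-concavity of $\{A_n\}$.  For this I would sharpen Lemma~\ref{Lem:bounds-R} to a second-order expansion of the shape
\[
3+2\sqrt{2}-\frac{c_1}{n}-\frac{c_2}{n^2} \;<\; r_n \;<\; 3+2\sqrt{2}-\frac{c_1}{n+1}-\frac{c_2}{(n+1)^2},
\]
repeating the four-term induction of Lemma~\ref{Lem:bounds-R} but chasing one extra power of $1/n$.  The refined envelope pins the first difference $r_{n+1}-r_n$ down to leading order $c_1/n^2$, and substituting into the ratio formula above gives $A_{n+1}/A_n=(3+2\sqrt{2})^2+O(1/n)$ together with explicit monotone upper and lower bounds.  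Log-concavity of $A_n$ for large $n$ then follows from an interlacing argument of the same type as Proposition~\ref{Prop:interlacing method}, with the finitely many small-$n$ cases checked numerically from \eqref{rec-R}.

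Stage two is the real difficulty: bootstrapping plain log-concavity to $\infty$-log-concavity.  Because $A_{n+1}/A_n\to(3+2\sqrt{2})^2$, the quotient $A_{n-1}A_{n+1}/A_n^2$ tends to $1$, so no constant-threshold criterion (in particular neither a Chen--Guo--Wang-style ratio criterion nor the golden-ratio $(3+\sqrt{5})/2$-factor criterion of McNamara--Sagan/Craig) can deliver $\infty$-log-concavity in a single shot.  My plan is an invariant-region argument in the spirit of Kauers--Paule: identify an $n$-dependent semialgebraic box $\mathcal{B}_n\subset\mathbb{R}_{>0}^3$ containing the triple $(A_{n-1},A_n,A_{n+1})$ and closed, after reindexing, under the operator $\mathcal{L}(a_{n-1},a_n,a_{n+1})=a_n^2-a_{n-1}a_{n+1}$ applied alongside its neighbors, so that a single induction on the level $k$ keeps the triples inside $\mathcal{B}_n$ and in particular non-negative.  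The stage-one bounds are what one hopes will make the first closure step go through, after which the induction propagates.

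The principal obstacle I expect is precisely this closure.  Since the sequence sits on the \emph{boundary} of log-concavity, the region $\mathcal{B}_n$ has to be calibrated to the correct $O(1/n)$ order, and it is a priori unclear that $\mathcal{L}$ preserves that delicate calibration.  Should the purely analytic invariant-box approach break down, a fall-back is a hybrid numerical/computer-algebra argument: verify $\mathcal{L}^k A\geq 0$ for all $k\leq k_0$ and $n\leq n_0$ directly from \eqref{rec-R} in high precision, and for $k>k_0$ combine the refined rational bounds on $r_n$ with a cylindrical algebraic decomposition applied to the polynomial system defined by \eqref{rec-R} together with a candidate invariant region, thereby certifying positivity of every subsequent iterate.
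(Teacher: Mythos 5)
First, a point of comparison: the paper offers no proof of this statement---it appears only as an open conjecture in the concluding section---so there is no argument of the author's to measure yours against. The only question is whether your proposal itself constitutes a proof, and it does not.

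Your preliminary observations are sound: the sign normalization is harmless because the operator $\mathcal{L}(a)_n=a_n^2-a_{n-1}a_{n+1}$ is unchanged under $a\mapsto -a$, and the two identities expressing $A_n$ and $A_{n+1}/A_n$ in terms of $r_n=R_{n+1}/R_n$ are correct, as is the observation that $A_{n+1}/A_n\to(3+2\sqrt{2})^2$ if $r_{n+1}-r_n$ has a clean second-order asymptotic. But everything past that is a program rather than an argument. Stage one (plain log-concavity of $\{A_n\}$) rests on a second-order refinement of the bounds of Lemma~\ref{Lem:bounds-R} that you describe but do not establish: the constants $c_1,c_2$ are not determined, the induction is not carried out, and it is not verified that the resulting envelope is tight enough --- sandwiching $r_{n+1}-r_n$ between two expressions of the same leading order still only confines the ratio $(r_{n+1}-r_n)/(r_n-r_{n-1})$ to a multiplicative window, and you must check that this window is narrow enough to force $A_{n+1}^2-A_nA_{n+2}\geq 0$, which is a condition at the next order again. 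Stage two is where you yourself concede the essential gap: you correctly diagnose that constant-factor criteria in the style of Chen--Guo--Wang or McNamara--Sagan cannot apply because $A_{n-1}A_{n+1}/A_n^2\to 1$, and you propose an invariant region for the iterates of $\mathcal{L}$, but you neither exhibit such a region nor prove it is preserved; your own sentence that ``it is a priori unclear that $\mathcal{L}$ preserves that delicate calibration'' is an accurate self-assessment, and the fallback of finite verification for $k\leq k_0$, $n\leq n_0$ cannot be completed to a proof without exactly that missing closure step. In short, the proposal identifies a reasonable reformulation and the correct obstruction, but it establishes nothing beyond what the paper already contains, and the conjecture remains open.
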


\vspace{.3cm}

\noindent{\bf Acknowledgments.} This work was
supported by the National Science Foundation of China.

\end{document}